 \newtheorem{thm}{Theorem}[section]
 \newtheorem{corollary}[thm]{Corollary}
 \newtheorem{lemma}[thm]{Lemma}
 \newtheorem{Proposition}[thm]{Proposition}
 \theoremstyle{definition}
 \theoremstyle{remark}
 \newtheorem{example}{Example}
 \numberwithin{equation}{section}
 \newcommand{\R}{\mathbb{R}}
\begin{document}

%
%

\title[Isoperimetric Inequalities in Normed Planes]
 {Isoperimetric Inequalities in Normed Planes}

\author[R.Segadas]{Rafael S. dos Santos}

\address{%
Departamento de Matem\'{a}tica- PUC-Rio\br
Rio de Janeiro, RJ, Brasil}
\email{rsegadas@gmail.com}

\author[M.Craizer]{Marcos Craizer}

\address{%
Departamento de Matem\'{a}tica- PUC-Rio\br
Rio de Janeiro, RJ, Brasil}
\email{craizer@puc-rio.br}

\thanks{The authors wants to thank CNPq and CAPES (financial code 001) for financial support during the preparation of this manuscript. This manuscript arises from the Ph.D. dissertation of the first author under the supervision of the second author (\cite{Segadas}).\newline E-mail of the corresponding author: craizer@puc-rio.br}

\subjclass{52A10, 52A40}

\keywords{Minkowski geometry; Curves of constant width; Wigner caustic; Isoperimetrix}

\date{August 28, 2020}

\begin{abstract}
The classical isoperimetric inequality can be extended to a general normed plane (\cite{Busemann1}). In the Euclidean plane, the defect in the 
isoperimetric inequality can be calculated in terms of the signed areas of some singular sets. In this paper we consider normed planes with piecewise smooth unit balls
and the corresponding class of admissible curves. For such an admissible curve, the singular sets are defined as projections in the subspaces of symmetric and constant width admissible curves. In this context, we obtain some improved isoperimetric inequalities whose equality hold for symmetric or constant width curves.
\end{abstract}

\maketitle

\section{Introduction}

The isoperimetric inequality in the plane is an old problem: It states that 
\begin{equation*}
\frac{L^2}{4\pi}\geq A(\gamma),
\end{equation*}
for any simple convex curve $\gamma$, with equality holding only for circles. 
The isoperimetric inequality has been extended to an arbitrary normed plane by Busemann. 
In this case, the curve of a fixed area that minimizes length is not the unit circle, but the dual unit circle, also called {\it isoperimetrix} (\cite{Busemann1},\cite{Guggenheimer},\cite{Martini},\cite{Strang},\cite{Thompson96}). By duality, for a fixed area, the unit circle is the curve that minimizes the dual length.

Normed planes with smooth strictly convex unit circles were considered in \cite{CTB1} and normed planes with polygonal unit circles were considered in \cite{CTB2}. In both cases, the notion of dual length of a convex curve can be extended to a (signed) dual length in a class $\mathcal{C}$ of curves, whose elements are called {\it admissible} curves. In the strictly convex case, the class $\mathcal{C}$ of admissible curves consists of the smooth Lagrangian curves of degree $1$, while for the polygonal case, the admissible curves are polygons with sides parallel to the unit circle. 
Consider the subspaces $\mathcal{C}_{cw}^{0}\subset\mathcal{C}$ of symmetric curves and $\mathcal{C}_{sym}^{0}\subset\mathcal{C}$ of constant width curves, both with zero dual length. Then the mixed area defines an inner product in $\mathcal{C}$ such that $\mathcal{C}_{cw}^{0}$ and $\mathcal{C}_{sym}^{0}$ become orthogonal. 
In this paper we generalize these constructions to normed planes whose unit ball is piecewise smooth. 

For a normed plane with piecewise unit circle, we define a class $\mathcal{C}$ of admissible curves and denote by 
$\mathcal{C}_{cw}^{0}\subset\mathcal{C}$ the class of constant width curves with zero dual length. Then the orthogonal
projection $\pi_{cw}^0(\gamma)$ of $\gamma\in\mathcal{C}$ on $\mathcal{C}_{cw}^0$ with respect to the signed inner product given by the mixed area is a well-known object, the {\it Wigner caustic}, also called {\it area evolute} or {\it mid-point parallel tangent line} of $\gamma$ (\cite{Craizer},\cite{Craizer-Martini},\cite{Giblin08}). 

The orthogonal complement of $\mathcal{C}_{cw}^{0}$ in $\mathcal{C}$ can be orthogonally decomposed in $\mathcal{C}_{sym}^{0}$, 
the space of symmetric curves with zero dual length and the one-dimensional subspace $\mathcal{U}$ consisting of multiples of the unit 
circle. In the euclidean case, the orthogonal projection $\pi_{sym}^{0}(\gamma)$ of $\gamma\in\mathcal{C}$ on 
$\mathcal{C}_{sym}^{0}$ was called 
the {\it constant with measure set} of $\gamma$ in \cite{Zwier3}.

Based on the above decompositions, we prove a formula for the isoperimetric defect related to the signed areas of $\pi_{cw}^0(\gamma)$
and $\pi_{sym}^0(\gamma)$. Our approach, besides providing another proof of the isoperimetric inequality of \cite{Busemann1}, also gives rise to some improved isometric inequalities whose equalities hold only for symmetric or constant width curves. These results generalize
the results of \cite{Zwier1} and \cite{Zwier3} in the euclidean plane. We also give a proof of Lhuilier's inequality, taking advantage of the fact
that our proof holds for smooth unit circles as well as for polygonal unit circles.

The paper is organized as follows: In section 2 we define the admissible curves associated with the unitary ball of a normed plane. In section 3 we discuss mixed areas.
In section 4 we decompose the space of admissible curves with zero dual length in two orthogonal subspaces and discuss the properties of the projections of a curve in these subspaces. In section 5 we prove the improved isoperimetric inequalities, which are the main results of the paper.

\section{Unit Ball and Admissible Curves}

\subsection{Unit ball and its dual}

Consider a normed plane with unit ball $U$. We shall assume that the boundary of $U$, $u=\partial U$, is piecewise smooth. 
More precisely, we shall assume that $u=\bigcup_{1\leq i\leq 2n} u_i$, where $u_i$ are smooth arcs with $u_{i+n}=-u_i$. We shall also
assume that each $u_i$ is either a smooth strictly convex arc or a straight segment.

The dual ball $U^*$ is defined as
\begin{equation*}
U^*=\{ f\in (\R^2)^*|\ ||f||\leq 1\},
\end{equation*}
where $||f|| =\sup\{ |f(x)|\ |\ x\in U\}$. 
Each functional $f\in (\R^2)^*$ can be represented by a vector $v\in\R^2$ by the relation
\begin{equation}\label{eq:Identification}
f(x)=\left[  x,  v  \right], \ x\in\R^2,
\end{equation}
where $[a,b]$ denotes the determinant of the $2\times 2$ matrix whose columns are $a,b\in\R^2$. We shall represent $U^*$ by $V$ under this identification.

Given $v\in\R^2$, let $L_v$ denote the support line of $U$ parallel to $v$, and assume $L_v$ touches $U$ in $u\in\partial U$. Then
$v\in V$ if and only if $[u,v]=1$. Thus if $u$ belongs to a smooth arc, the corresponding $v$ also belongs to a smooth arc. If $u$ belongs
to a straight segment, then $v$ is constant. Finally if $u$ is a vertex, then $v$ describes a segment. An example of a piecewise smooth unit ball 
$U$ and its dual $V$ can be seen in Figure \ref{Fig:MixedBall}. 

\begin{figure}[htb]
\centering
\subfigure{
\includegraphics[width=.3\textwidth]{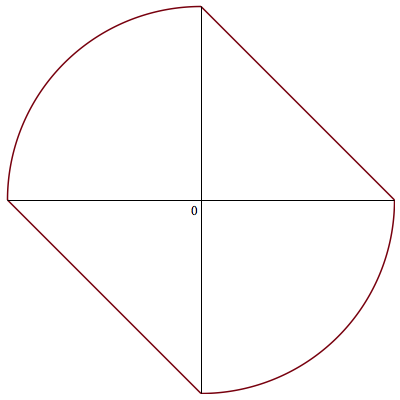}  }
\subfigure{
\includegraphics[width=.3\textwidth]{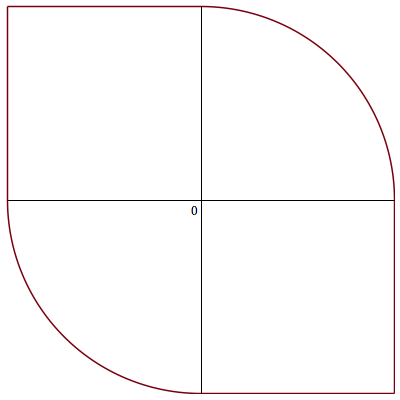} }
\caption{An example of a piecewise smooth unit ball and its dual.}
\label{Fig:MixedBall}
\end{figure}

\subsection{Parameterization of the unit circle}

Consider a parameterization $u(t)$ of $u$ such that $u'(t)\neq 0$, for any $t\in[t_i,t_{i+1}]$, where $u(t_i)$ are the vertices of $u$. There are $2$ types of intervals $[t_i,t_{i+1}]$: The smooth intervals,
where $u$ and $v$ are strictly convex, and the linear intervals, where $u$ is a straight segment and $v$ is constant. At smooth intervals we shall assume 
also that $[u'(t),u''(t)]\neq 0$. For any interval $t_i\leq t\leq t_{i+1}$, we can write
\begin{equation}\label{eq:DualCircle}
v(t)=\frac{u'(t)}{[u(t),u'(t)]}.
\end{equation}
In fact, the functional associated with $v(t)$ under the identification \eqref{eq:Identification} is zero in the direction $u'(t)$ and is one at $u(t)$. Thus it belongs to the dual unit circle at direction $u'(t)$. Since the straight lines are not included in Equation \eqref{eq:DualCircle}, this formula parameterizes only a part of the dual circle,  but
in fact this part is all that we need in the paper.

\begin{example}\label{ex:MixedBall}
Consider the unit ball shown in Figure  \ref{Fig:MixedBall}. Then a parameterization for the unit circle is given by 
\begin{equation*}\label{eq:MixedBall}
u(t)=
\left\{
\begin{array}{c}
(1-t, t), \ \ \ 0\leq t\leq 1,\\
(\cos(\frac{\pi}{2}t), \sin(\frac{\pi}{2}t)),\ \ \   1\leq t\leq 2,\\
(t-3, 2-t), \ \ \ 2\leq t\leq 3,\\
(\cos(\frac{\pi}{2}t), \sin(\frac{\pi}{2}t)), \ \ \  3\leq t\leq 4, \\
\end{array}
\right.
\end{equation*}
The corresponding points in the dual circle are given by
\begin{equation*}\label{eq:MixedBallDual}
v(t)=
\left\{
\begin{array}{c}
(-1, 1), \ \ \ 0\leq t\leq 1,\\
(-\sin(\frac{\pi}{2}t), \cos(\frac{\pi}{2}t)),\ \ \   1\leq t\leq 2,\\
(1, -1), \ \ \ 2\leq t\leq 3,\\
(-\sin(\frac{\pi}{2}t), \cos(\frac{\pi}{2}t)), \ \ \  3\leq t\leq 4.
\end{array}
\right.
\end{equation*}
Note that this parameterization does not include the straight lines of the dual ball.
\end{example}

\subsection{Admissible curves}

Denote by $\mathcal{C}=\mathcal{C}(U)$ the class of piecewise smooth closed curves $\gamma$ parameterized by $0\leq t\leq 2T$, $\gamma(0)=\gamma(2T)$ such that, for $t_i\leq t\leq t_{i+1}$,
\begin{equation}\label{eq:CurvatureRadius}
\gamma'(t)=r(t)u'(t),
\end{equation}
for some scalar function $r(t)$. We remark that when $u$ is smooth, the class $\mathcal{C}$ includes all the convex
smooth curves (\cite{CTB1}), and when $u$ is polygonal, the class $\mathcal{C}$ consists of all polygons with sides parallel to those of $u$ (\cite{CTB2}).

The scalar $r(t)=r_{\gamma}(t)$ is called the {\it curvature radius} of $\gamma$ at $\gamma(t)$ (\cite{Balestro},\cite{Petty1}). Up to sign, the curvature radius
is independent of the choice of the parametrizations of $\gamma$ and $u$. 
At an interval where $u$ is a straight segment, Equation \eqref{eq:CurvatureRadius} implies that $\gamma$ is also a straight segment and $r(t)$ is the rate between the lengths of the $\gamma$ and $u$ segments. At a smooth interval, we have that
$$
[\gamma'(t),\gamma''(t)]=r(t)^2[u',u''](t),
$$
which implies that $\gamma$ has no inflection points. 
It is clear from Equation \eqref{eq:CurvatureRadius} that, up to a translation, we can recover $\gamma$ from $r$. It is easy to see from the curvature radius whether or not the curve is convex, as next lemma shows:

\begin{lemma}\label{lemma:rpositive}
The curve $\gamma\in\mathcal{C}$ is convex if and only if $r(\gamma)$ does not change sign. 
\end{lemma}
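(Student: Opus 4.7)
The plan is to translate the defining identity $\gamma'(t)=r(t)u'(t)$ into a statement about the tangent direction function of $\gamma$, and then invoke the classical characterization of simple closed convex curves as those whose direction function is monotone with total increment $\pm 2\pi$. Since $u$ bounds the convex ball $U$, its direction function already has this property, so everything is controlled by the sign of $r$.

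For sufficiency, suppose $r$ does not change sign. After reversing the parameterization of $\gamma$ if necessary, I may assume $r(t)\geq 0$. Then at every $t$ where $r(t)>0$ the unit tangent of $\gamma$ coincides with that of $u$, and on linear intervals $\gamma$ is a segment parallel to the corresponding segment of $u$. The direction function of $\gamma$ therefore inherits monotonicity from that of $u$, and at each vertex $u(t_i)$ the exterior angle picked up by $\gamma$ is nonnegative, because both $r(t_i^-)$ and $r(t_i^+)$ are nonnegative. Since the total turning equals that of $u$, namely $2\pi$, the curve $\gamma$ bounds a convex region.

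For necessity, suppose $r$ changes sign, so there exist $t_1<t_2$ with $r(t_1)>0$ and $r(t_2)<0$. The tangents $\gamma'(t_1)$ and $\gamma'(t_2)$ point in opposite half-planes relative to the monotone family $\{u'(t)\}$, so the direction function of $\gamma$ cannot remain monotone between them without exceeding the total turning $\pm 2\pi$ permitted for a simple closed curve. Concretely, at any $t_0$ where $r$ crosses zero on a smooth arc, $\gamma'$ vanishes and then reverses, producing a cusp, while on a linear arc a sign change of $r$ forces $\gamma$ to retrace the same segment; in both cases $\gamma$ fails to bound a convex region.

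The main obstacle is the uniform treatment of the piecewise behaviour: interior smooth arcs where $r$ may vanish continuously, linear arcs where $\gamma$ is a straight segment and ``direction'' is constant, and vertices of $u$ where $r$ may jump. Once each of these cases is checked—so that the sign condition on $r$ translates cleanly to monotonicity of the direction function of $\gamma$—the lemma reduces to the standard equivalence between monotone rotation of the tangent direction and convexity of a simple closed curve.
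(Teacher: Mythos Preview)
Your approach is essentially the same as the paper's, just in different language: the paper checks local convexity at each vertex and along each smooth arc and then uses that the index of $\gamma$ is $\pm 1$; you package the same information as monotonicity of the tangent direction function with total turning $2\pi$. For sufficiency your treatment of the vertices (nonnegative exterior angle when $r(t_i^-),r(t_i^+)\geq 0$) is exactly the paper's observation that the tangent jump at a vertex is less than $\pi$.

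There is, however, one case missing in your necessity argument. You handle a sign change of $r$ in the interior of a smooth arc (cusp) and in the interior of a linear arc (the segment is retraced), but not a sign change across a vertex $t_i$ of $u$. The paper treats this explicitly: if $r(t_i^-)>0$ and $r(t_i^+)<0$, then $\gamma'(t_i^-)$ points along $u'(t_i^-)$ while $\gamma'(t_i^+)$ points along $-u'(t_i^+)$, so the turning of $\gamma$ at $t_i$ is $\pi$ plus the (positive) exterior angle of $u$ there, i.e.\ strictly greater than $\pi$, and $\gamma$ has a reflex corner. Your sentence about the direction function ``not remaining monotone without exceeding the total turning $\pm 2\pi$'' is too vague to cover this case on its own; once you add the vertex computation, your argument is complete and coincides with the paper's.
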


\begin{proof}
Denote $\gamma'(t^+)=\lim_{s\to t^+}\gamma'(t)$ and $\gamma'(t^-)=\lim_{s\to t^-}\gamma'(s)$. If $r$ changes sign at a point $t_0$ of a smooth interval, then 
$\gamma'(t_0^+)$ and $\gamma'(t_0^-)$
are pointing in opposite directions and so the curve is not convex at this point. Similarly, if $r$ changes sign at a vertex $t_0$, then $\gamma'(t_0^+)$ and 
$\gamma'(t_0^-)$ make an angle bigger than $\pi$, and again the curve is not convex at this point.

Conversely, if $r$ does not change sign, the tangents at vertices are making angles strictly smaller than $\pi$ and so the curve is locally convex at 
the vertices. Since the curve has no inflection points, it is also locally convex at the smooth arcs. 
Finally, since the index of $\gamma$ is $\pm 1$, the curve $\gamma$ is necessarily convex.
\end{proof}


\begin{corollary}\label{cor:ConvexParallel}
Given $\gamma\in\mathcal{C}$, there exists a constant $K>0$ such that $\gamma+Ku$ is convex.
\end{corollary}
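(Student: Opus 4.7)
The plan is to reduce this to the previous lemma by computing the curvature radius of $\gamma+Ku$ and showing it can be made positive everywhere by choosing $K$ large.

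First I would observe that $\mathcal{C}$ is closed under the operation $\gamma\mapsto \gamma+Ku$ and that the curvature radius behaves additively under it. Indeed, differentiating gives
\begin{equation*}
(\gamma+Ku)'(t)=\gamma'(t)+Ku'(t)=\bigl(r_\gamma(t)+K\bigr)u'(t)
\end{equation*}
on each piece $[t_i,t_{i+1}]$ by \eqref{eq:CurvatureRadius}. Hence $\gamma+Ku\in\mathcal{C}$ with curvature radius $r_{\gamma+Ku}(t)=r_\gamma(t)+K$.

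Next, by the previous lemma, $\gamma+Ku$ is convex if and only if $r_\gamma(t)+K$ does not change sign on $[0,2T]$. So it suffices to find $K>0$ such that $r_\gamma(t)+K>0$ for every $t$.

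The remaining step is to argue that $r_\gamma$ is bounded below on $[0,2T]$. On each smooth interval $[t_i,t_{i+1}]$, the function $r_\gamma$ is the scalar ratio of the continuous vector $\gamma'(t)$ to the continuous nonvanishing vector $u'(t)$, hence continuous and bounded on the compact interval; on each linear interval it is the constant ratio between the lengths of the corresponding $\gamma$- and $u$-segments. Since there are finitely many pieces, $m:=\inf_t r_\gamma(t)$ is finite, and any choice $K>\max(0,-m)$ makes $r_\gamma(t)+K>0$ throughout $[0,2T]$. Applying the lemma to $\gamma+Ku$ yields convexity, which is the desired conclusion.

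There is no substantial obstacle here; the only thing to be slightly careful about is the behaviour of $r_\gamma$ at the finitely many vertices $t_i$, where one-sided limits may differ, but the piecewise smoothness hypothesis on $\gamma$ and $u$ keeps $r_\gamma$ bounded on each closed subinterval, which is all that is needed.
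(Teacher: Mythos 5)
Your proof is correct and follows essentially the same route as the paper: compute $r_{\gamma+Ku}=r_\gamma+K$ and invoke the sign criterion of the preceding lemma, choosing $K$ larger than $-\inf r_\gamma$. The extra care you take in justifying that $r_\gamma$ is bounded below (finitely many pieces, continuity on each closed smooth subinterval) is a detail the paper leaves implicit.
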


\begin{proof}
The curvature radius of $\gamma+Ku$ is $r(t)+K$. If we choose $K\geq -\min\{r(t)\}$, we obtain a convex curve. 
\end{proof}

\subsection{Dual length}

Given $\gamma\in\mathcal{C}$,
from Equations \eqref{eq:DualCircle} and \eqref{eq:CurvatureRadius}, we can write
\begin{equation*}
\gamma'(t)=r(t)[u,u'](t)v(t),
\end{equation*}
for any $t$ in a smooth or linear interval. 
The (signed) {\it dual length} of $\gamma\in\mathcal{C}$ is defined by
\begin{equation}\label{eq:DualLength}
L_*(\gamma)=\int_0^{2T} r(t)[u,u'](t)dt.
\end{equation}
When $\gamma$ is convex, $L_*(\gamma)$ coincides with the dual length of $\gamma$.

\begin{example}\label{ex:AdmCurve}
Let  
\begin{equation*}\label{eq:AdmCurve}
\gamma(t)=
\left\{
\begin{array}{c}
(2-t,1+t), \ \ \ 0\leq t\leq 1,\\
\frac{1}{\sqrt{15\cos^2(\frac{\pi}{2}t)+1}}(16\cos(\frac{\pi}{2}t),\sin(\frac{\pi}{2}t))+ (1,1),\ \ \   1\leq t\leq 2,\\
(-11+4t,9-4t), \ \ \ 2\leq t\leq 3,\\
\frac{1}{\sqrt{15\sin^2(\frac{\pi}{2}t)+1}}(\cos(\frac{\pi}{2}t),16\sin(\frac{\pi}{2}t))+ (1,1), \ \ \  3\leq t\leq 4, \\
\end{array}
\right.
\end{equation*}
(see Figure \ref{fig:AdmCurve}).

\begin{figure}[htb]
\centering
\includegraphics[width=0.3\linewidth]{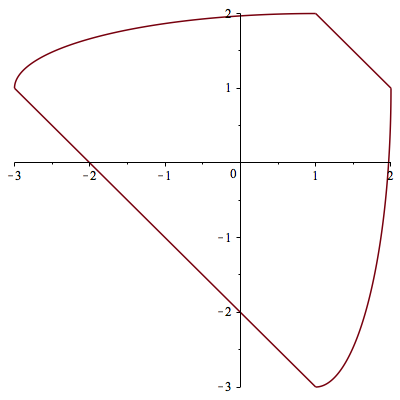}
 \caption{ A curve in $\mathcal{C}$ (Example \ref{ex:AdmCurve}). }
\label{fig:AdmCurve}
\end{figure}

The curvature radius is given by
\begin{equation*}\label{eq:SupAdmCurve}
r(t)=
\left\{
\begin{array}{c}
1, \ \ \ 0\leq t\leq 1,\\
\frac{16}{\sqrt{(15\cos^2(\frac{\pi}{2}t)+1)^3}},\ \ \   1\leq t\leq 2,\\
4, \ \ \ 2\leq t\leq 3,\\
\frac{16}{\sqrt{(15\sin^2(\frac{\pi}{2}t)+1)^3}}, \ \ \  3\leq t\leq 4,\\
\end{array}
\right.
\end{equation*}

We can calculate the dual length of $\gamma$ by equations \eqref{eq:DualLength} to obtain
\begin{equation*}
L_*(\gamma)\approx 13.58.
\end{equation*}
\end{example}

\section{Mixed areas}

\subsection{Basic properties}

Consider two curves $\gamma_1,\gamma_2\in\mathcal{C}$. Then the mixed area is defined by 
$$
A(\gamma_1,\gamma_2)=\frac{1}{2}\int_0^{2T} [\gamma_1,\gamma_2'](t) dt.
$$
The signed area of $\gamma\in\mathcal{C}$ is defined as $A(\gamma)=A(\gamma,\gamma)$. If $\gamma$ is convex, $A(\gamma)$ is the area of the region
bounded by $\gamma$. 

\begin{lemma}
$A(\gamma_1,\gamma_2)$ is a symmetric bilinear map in $\mathcal{C}$. Moreover,
\begin{equation}\label{eq:CompDualMixed}
L_*(\gamma)=2A(u,\gamma).
\end{equation}
\end{lemma}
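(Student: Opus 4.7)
The plan is to verify the three properties separately, all three being straightforward consequences of the definition, the multilinearity and antisymmetry of the bracket $[\cdot,\cdot]$, and integration by parts.

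First I would establish bilinearity. Since $[\cdot,\cdot]$ is bilinear in its two arguments and differentiation is linear, the integrand $[\gamma_1,\gamma_2'](t)$ depends linearly on each of $\gamma_1$ and $\gamma_2$; linearity of the integral closes the argument. No regularity subtlety arises at the finitely many corner points of the piecewise smooth curves because the integrand is bounded and we only integrate.

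For symmetry, I would integrate by parts. On each smooth piece $[t_i,t_{i+1}]$,
\begin{equation*}
\int_{t_i}^{t_{i+1}}[\gamma_1,\gamma_2'](t)\,dt = [\gamma_1,\gamma_2]\Big|_{t_i}^{t_{i+1}}-\int_{t_i}^{t_{i+1}}[\gamma_1',\gamma_2](t)\,dt.
\end{equation*}
Summing over $i$, the boundary terms telescope because $\gamma_1$ and $\gamma_2$ are continuous across the corners, and the outermost contribution vanishes by closedness, $\gamma_j(0)=\gamma_j(2T)$. Hence $\int_0^{2T}[\gamma_1,\gamma_2']\,dt = -\int_0^{2T}[\gamma_1',\gamma_2]\,dt = \int_0^{2T}[\gamma_2,\gamma_1']\,dt$, where the last step uses the antisymmetry of $[\cdot,\cdot]$. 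Dividing by $2$ gives $A(\gamma_1,\gamma_2)=A(\gamma_2,\gamma_1)$.

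For \eqref{eq:CompDualMixed}, I would substitute the defining relation \eqref{eq:CurvatureRadius}, $\gamma'(t)=r(t)u'(t)$, into $2A(u,\gamma)$:
\begin{equation*}
2A(u,\gamma)=\int_0^{2T}[u(t),\gamma'(t)]\,dt=\int_0^{2T} r(t)\,[u(t),u'(t)]\,dt = L_*(\gamma),
\end{equation*}
which is exactly the definition \eqref{eq:DualLength} of the signed dual length. The only issue to watch for is that $r(t)\,[u,u'](t)$ is well defined on the linear intervals, where $[u,u']=0$ but $\gamma'=ru'$ still holds with bounded $r$; the integrand simply vanishes there, consistent with both sides.

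I expect no real obstacle: the piecewise smooth setting is the only place where some care is needed, but the continuity of $\gamma_1,\gamma_2$ across vertices makes the boundary contributions in the integration by parts telescope away, and the closedness kills the remaining global endpoint term.
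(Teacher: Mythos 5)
Your proposal is correct and follows essentially the same route as the paper: symmetry via integrating the identity $[\gamma_1,\gamma_2]'=[\gamma_1',\gamma_2]+[\gamma_1,\gamma_2']$ over the closed curve, and the dual-length identity by substituting $\gamma'=ru'$ into $2A(u,\gamma)$. Your extra care with the telescoping of boundary terms at the corners is a welcome (if minor) elaboration of the same argument.
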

\begin{proof}
Since $[\gamma_1,\gamma_2]'=[\gamma_1',\gamma_2]+[\gamma_1,\gamma_2']$ we obtain
$$
\int_0^{2T}[\gamma_1,\gamma_2'](t)dt=\int_0^{2T}[\gamma_2,\gamma_1'](t)dt.
$$
For the second assertion observe that
\begin{equation*}
L_*(\gamma)=\int_0^{2T} r(t)[u,u'](t)dt=\int_0^{2T} [u,\gamma'](t)dt=2A(u,\gamma),
\end{equation*}
thus proving the lemma.
\end{proof}

\subsection{Minkowski inequality}

The Minkowski inequality states that for {\it convex} curves $\gamma\in\mathcal{C}$, 
\begin{equation*}
A(\gamma_1,\gamma_2)^2\geq A(\gamma_1)A(\gamma_2),
\end{equation*}
with equality if and only if $\gamma_1$ is a multiple of $\gamma_2$. 

\begin{lemma}\label{lemma:DesigMink}
For any $\gamma\in\mathcal{C}$,
\begin{equation*}
L_*(\gamma)^2\geq 4A(\gamma)A(u),
\end{equation*}
with equality if and only if $\gamma$ is a multiple $u$. 
\end{lemma}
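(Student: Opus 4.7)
The plan is to reduce to the Minkowski inequality stated above, using Corollary \ref{cor:ConvexParallel} to circumvent the fact that $\gamma$ need not be convex, and then exploiting the bilinearity of the mixed area so that all terms involving the auxiliary constant $K$ cancel.

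First, I would invoke Corollary \ref{cor:ConvexParallel} to pick $K>0$ such that $\tilde\gamma:=\gamma+Ku$ is convex. Since $u$ is also convex, the Minkowski inequality applies to the pair $(\tilde\gamma,u)$, giving
\begin{equation*}
A(\tilde\gamma,u)^2 \geq A(\tilde\gamma)\,A(u),
\end{equation*}
with equality iff $\tilde\gamma$ is a multiple of $u$.

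Next, I would expand both sides using bilinearity and symmetry of $A$, together with \eqref{eq:CompDualMixed}. On the left,
\begin{equation*}
A(\tilde\gamma,u)=A(\gamma,u)+K A(u,u)=\tfrac{1}{2}L_*(\gamma)+K A(u),
\end{equation*}
and on the right,
\begin{equation*}
A(\tilde\gamma)=A(\gamma)+2K A(\gamma,u)+K^2 A(u)=A(\gamma)+K L_*(\gamma)+K^2 A(u).
\end{equation*}
Substituting and expanding, the mixed terms $K L_*(\gamma)A(u)+K^2 A(u)^2$ appear on both sides and cancel, leaving exactly $\tfrac{1}{4}L_*(\gamma)^2\geq A(\gamma) A(u)$, which is the desired inequality.

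For the equality case, Minkowski's equality forces $\tilde\gamma=\lambda u$ (up to translation) for some scalar $\lambda$, hence $\gamma=(\lambda-K)u$ is a multiple of $u$; conversely, if $\gamma=cu$ then $L_*(\gamma)=2cA(u)$ and $A(\gamma)=c^2 A(u)$, so equality holds. I do not anticipate a serious obstacle: the only thing one must be careful about is that $u$ itself is convex (so Minkowski applies with the right sign) and that the signed mixed-area identity \eqref{eq:CompDualMixed} genuinely holds for arbitrary $\gamma\in\mathcal{C}$, not only convex ones—both of which are already in place.
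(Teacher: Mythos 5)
Your proposal is correct and follows essentially the same route as the paper: both pass to the convex curve $\gamma+Ku$ via Corollary \ref{cor:ConvexParallel}, apply the Minkowski inequality to the pair $(\gamma+Ku,u)$, and observe that the $K$-dependent terms cancel, leaving $A(\gamma,u)^2\geq A(\gamma)A(u)$, i.e.\ $L_*(\gamma)^2\geq 4A(\gamma)A(u)$ by \eqref{eq:CompDualMixed}. Your explicit treatment of the equality case is a welcome addition, as the paper leaves it implicit.
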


\begin{proof}
Since $\gamma\in\mathcal{C}$, by Corollary \ref{cor:ConvexParallel} there exists a constant $K>0$ such that $\gamma+Ku$ are convex. Then, by Minkowski inequality,
$$
A(\gamma+Ku,u)^2\geq A(\gamma+Ku)A(u).
$$
Since $A(\gamma+Ku)=A(\gamma)+K^2A(u)+2KA(\gamma,u)$ and 
$A(\gamma+Ku,u)=A(\gamma,u)+KA(u),$
we conclude that 
$$
A(\gamma,u)^2\geq A(\gamma)A(u),
$$
thus proving the lemma.
\end{proof}

\subsection{Curves of constant width}

We say that $\gamma\in\mathcal{C}$ is of {\it constant width} if 
\begin{equation*}
[\gamma,v](t+T)+[\gamma,v](t)=c,
\end{equation*} 
for some constant $c$. 

\begin{lemma}
For constant width curves, we have that 
$$
L_*(\gamma)=2cA(U)
$$
and so necessarily $c=w_{\gamma}$.
\end{lemma}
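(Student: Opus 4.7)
The plan is to derive the formula from the identity $L_*(\gamma) = 2A(u,\gamma)$ of equation \eqref{eq:CompDualMixed}, combined with the antipodal symmetry of the unit-circle parameterization ($u(t+T) = -u(t)$) and the constant-width hypothesis. First, I would use the symmetry of the mixed area established in the previous lemma to rewrite
$$2A(u,\gamma) = \int_0^{2T}[\gamma(t), u'(t)]\,dt,$$
and then apply the identity $u'(t) = [u,u'](t)\,v(t)$ coming from \eqref{eq:DualCircle} to obtain
$$L_*(\gamma) = \int_0^{2T} [\gamma,v](t)\,[u,u'](t)\,dt.$$
This brings the quantity $[\gamma,v](t)$ from the constant-width definition directly into the integrand.

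The second step is to split $\int_0^{2T}$ as $\int_0^T + \int_T^{2T}$ and perform the substitution $t = s+T$ in the second piece. The antipodal relation gives $u(t+T) = -u(t)$ and $u'(t+T) = -u'(t)$, so the two minus signs cancel in the bracket $[u(t+T), u'(t+T)] = [u,u'](t)$, while the factor $[\gamma,v](t)$ is shifted to $[\gamma,v](t+T)$. Adding the two halves yields
$$L_*(\gamma) = \int_0^T \bigl([\gamma,v](t) + [\gamma,v](t+T)\bigr)\,[u,u'](t)\,dt = c \int_0^T [u,u'](t)\,dt,$$
where the second equality uses the constant-width hypothesis.

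To finish, I would identify $\int_0^T [u,u'](t)\,dt$ with a multiple of $A(U)$: the same antipodal substitution applied to $\int_0^{2T}[u,u']\,dt$ (which equals twice the signed area of $u$, and hence a fixed multiple of $A(U)$) shows that the integrand is invariant under $t \mapsto t+T$, so the integral over $[0,T]$ is exactly half of the one over $[0,2T]$. Combining this with the previous step produces the stated proportionality between $L_*(\gamma)$ and $c\,A(U)$. The second assertion $c = w_\gamma$ then follows by comparing this identity with the definition of the width of a constant-width curve in a Minkowski plane (the analogue of Barbier's theorem, which normalizes $w_\gamma$ precisely so that this formula holds).

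The main obstacle is bookkeeping: one must handle with care the signs produced by the antipodal relation inside each bracket and ensure that the $t \mapsto t+T$ substitution is applied consistently in both the $[\gamma,v]$ and $[u,u']$ factors. Beyond this, the argument is a direct assembly of the mixed-area identity, the dual-length formula, and the constant-width hypothesis, with no genuinely hard estimate involved.
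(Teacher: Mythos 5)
Your argument is essentially the paper's own proof, with the $t\mapsto t+T$ substitution made explicit: the paper's one-line computation is exactly your chain $2A(\gamma,u)=\int_0^{2T}[\gamma,u']\,dt=\int_0^T[u,u']\bigl([\gamma,v](t)+[\gamma,v](t+T)\bigr)\,dt=c\int_0^T[u,u']\,dt$. Two small remarks: your bookkeeping (like the paper's displayed computation) actually yields $L_*(\gamma)=c\,A(U)$, since $\int_0^T[u,u']\,dt=A(U)$; the factor $2$ in the stated identity appears to be a typo, as only $L_*(\gamma)=cA(U)$ is compatible with the conclusion $c=w_{\gamma}$ under the paper's definition $w_{\gamma}=L_*(\gamma)/A_U$. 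Also, no Barbier-type theorem is needed for the second assertion --- $w_{\gamma}$ is \emph{defined} as $L_*(\gamma)/A_U$, so $c=w_{\gamma}$ is immediate from the identity you derived.
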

\begin{proof}
Since
$$
2A(\gamma,u)=\int_0^{2T} [\gamma,u']dt=\int_0^T [u,u']\left( [\gamma,v](t)+[\gamma(t+T),v(t+T)]\right)dt
$$
we conclude that $L_*(\gamma)=2cA(U)$. 
\end{proof}

\begin{corollary}
A curve $\gamma\in\mathcal{C}$ with zero dual length is of constant width if and only if
\begin{equation*}\label{eq:ConstantWidth}
[\gamma,v](t+T)+[\gamma,v](t)=0.
\end{equation*} 
\end{corollary}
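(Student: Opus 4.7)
The plan is to observe that this corollary is essentially an immediate consequence of the preceding lemma, which says that for any constant width curve $\gamma \in \mathcal{C}$ with width constant $c$, one has $L_*(\gamma) = 2cA(U)$.

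For the forward direction, I would assume that $\gamma$ has zero dual length and is of constant width in the sense of the earlier definition, so there exists some constant $c$ with $[\gamma,v](t+T) + [\gamma,v](t) = c$. Applying the preceding lemma immediately gives $0 = L_*(\gamma) = 2cA(U)$. Since $U$ is the unit ball, $A(U) > 0$, forcing $c = 0$, which is exactly equation \eqref{eq:ConstantWidth}.

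For the converse, if $[\gamma,v](t+T) + [\gamma,v](t) = 0$ holds identically, then by the definition of constant width with $c=0$, $\gamma$ is of constant width. Note that consistency with the hypothesis ``$\gamma$ has zero dual length'' is automatic: plugging $c=0$ into the preceding lemma yields $L_*(\gamma) = 0$, so there is nothing to check.

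There is no real obstacle here; the only subtlety is to make clear that $A(U) \neq 0$ is what lets us divide out and conclude $c = 0$, rather than just $cA(U) = 0$. The proof is essentially a one-line invocation of the previous lemma in each direction.
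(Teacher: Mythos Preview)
Your proposal is correct and matches the paper's intent: the corollary is stated in the paper without proof, as an immediate consequence of the preceding lemma, and your argument spells out exactly that deduction in both directions.
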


\section{The subspaces of constant width and symmetric curves}

In this section we consider the mixed area as a signed inner product in $\mathcal{C}$.

\subsection{An orthogonal decomposition of $\mathcal{C}$}

Denote by $\mathcal{U}$ the $1$-dimensional subspace of $\mathcal{C}$ consisting of the constant multiples of the unit ball,
by $\mathcal{C}_{sym}^0\subset\mathcal{C}$ the subspace consisting of symmetric curves with respect to the origin with zero dual length and by 
$\mathcal{C}_{cw}^0\subset\mathcal{C}$ the subspace consisting of constant width curves, also with zero dual length. Then Equation \eqref{eq:CompDualMixed} 
implies that both $\mathcal{C}_{sym}^0$ and $\mathcal{C}_{cw}^0$ are
orthogonal to $\mathcal{U}$. 

\begin{lemma}
The subspaces $\mathcal{C}_{sym}^0$ and $\mathcal{C}_{cw}^0$ are orthogonal. 
\end{lemma}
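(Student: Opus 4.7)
Let $\gamma_1\in\mathcal{C}^{sym}_0$ and $\gamma_2\in\mathcal{C}^{cw}_0$; the goal is to show $A(\gamma_1,\gamma_2)=0$. My plan is to rewrite the mixed area integral using the dual parameterization $v$ in place of $\gamma'_2$ (or $\gamma'_1$), and then exploit the $T$-antiperiodic/periodic behaviour of the various factors to obtain cancellation between $[0,T]$ and $[T,2T]$.

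First I would collect the symmetries inherited from $u(t+T)=-u(t)$. Differentiating gives $u'(t+T)=-u'(t)$, hence $[u,u'](t+T)=[u,u'](t)$ and, by \eqref{eq:DualCircle}, $v(t+T)=-v(t)$. From $\gamma_1(t+T)=-\gamma_1(t)$ together with \eqref{eq:CurvatureRadius} I obtain $r_1(t+T)u'(t+T)=-r_1(t)u'(t)$, so $r_1(t+T)=r_1(t)$. Finally, since $\gamma_2$ is a constant width curve with zero dual length, the corollary just above gives
\begin{equation*}
[\gamma_2,v](t+T)+[\gamma_2,v](t)=0.
\end{equation*}

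Now, integrating by parts (or using the symmetry of $A$ proved previously), I would write
\begin{equation*}
2A(\gamma_1,\gamma_2)=-\int_0^{2T}[\gamma_1'(t),\gamma_2(t)]\,dt = \int_0^{2T} g_1(t)\,[\gamma_2(t),v(t)]\,dt,
\end{equation*}
where $g_1(t):=r_1(t)[u,u'](t)$, using $\gamma_1'=r_1[u,u']\,v$. Splitting the range at $T$ and substituting $t\mapsto t+T$ in the second half, the first symmetry gives $g_1(t+T)=g_1(t)$, while the combination of $v(t+T)=-v(t)$ with the constant width identity yields
\begin{equation*}
[\gamma_2(t+T),v(t+T)] = -[\gamma_2(t+T),v(t)] = -\bigl([\gamma_2(t+T),v(t+T)]\bigr)\cdot(-1)
\end{equation*}
which I would rewrite more cleanly as $[\gamma_2(t+T),v(t+T)]=-[\gamma_2(t),v(t)]$. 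Plugging in, the two half-integrals become negatives of each other and the total is zero.

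The only part that could conceal a subtlety is the derivation $r_1(t+T)=r_1(t)$ from symmetry of $\gamma_1$: one must check that the parameterization of $\gamma_1$ is synchronized with that of $u$ so that the shift by $T$ really corresponds to passing to the antipodal point on both curves, which follows directly from \eqref{eq:CurvatureRadius} and $u'(t+T)=-u'(t)$. Everything else is a short calculation, so I do not expect a serious obstacle beyond keeping signs straight.
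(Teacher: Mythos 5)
Your proposal is correct and follows essentially the same route as the paper: both split the mixed-area integral at $T$, use $r_1(t+T)=r_1(t)$, $[u,u'](t+T)=[u,u'](t)$, $v(t+T)=-v(t)$, and the zero-width identity $[\gamma_2,v](t+T)+[\gamma_2,v](t)=0$ to cancel the two halves. The only blemish is the circular-looking intermediate display for $[\gamma_2(t+T),v(t+T)]$, but the clean version you state afterwards is exactly what is needed.
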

\begin{proof}
If $\gamma_1\in\mathcal{C}_{cw}^0$ and $\gamma_2\in\mathcal{C}_{sym}^0$,
we have that 
$$
[\gamma_1(t+T),v(t+t)]+[\gamma_1(t),v(t)]=0, \ \ \ \gamma_2(t+T)=-\gamma_2(t).
$$
Thus 
$$
A(\gamma_1,\gamma_2)=\int_0^{T} [\gamma_1,\gamma_2'](t)dt+\int_0^{T} [\gamma_1,\gamma_2'](t+T)dt,
$$
$$
=\int_0^{T} r_2(t)[u,u'](t) \left( [\gamma_1(t),v(t)]-[\gamma_1(t+T),v(t)] \right) dt,
$$
$$
=\int_0^{T} r_2(t)[u,u'](t) \left( [\gamma_1(t),v(t)]+[\gamma_1(t+T),v(t+T)] \right) dt,
$$
which proves that $A(\gamma_1,\gamma_2)=0$.
\end{proof}

\subsection{Orthogonal projections}

For $\gamma\in\mathcal{C}$, let
$$
\gamma_1(t)=\frac{1}{2}\left( \gamma(t)+\gamma(t+T) \right)
$$
be the Wigner caustic of $\gamma$. Observe that 
$$
[\gamma_1(t),v(t)]+[\gamma_1(t+T),v(t+T)]=[\gamma_1(t),v(t)]-[\gamma_1(t),v(t)]=0,
$$
which implies that
$\gamma_1$ has constant width $0$, i.e., $\gamma_1\in\mathcal{C}_{cw}^0$. Moreover
$$
\gamma(t)-\gamma_1(t)=\frac{1}{2}\left( \gamma(t)-\gamma(t+T) \right)
$$
is symmetric, which means $\left(\gamma-\gamma_1\right)\in\mathcal{C}_{sym}^0\oplus\mathcal{U}$. We conclude that
\begin{equation*}
\pi_{cw}^0(\gamma)=\frac{1}{2}\left( \gamma(t)+\gamma(t+T) \right).
\end{equation*}
In other words, the Wigner caustic of $\gamma$ coincides with the orthogonal projection of $\gamma$ on ${\mathcal C}_{cw}^0$.

Now let
\begin{equation*}
\gamma_2(t)=\frac{1}{2}\left( \gamma(t)-\gamma(t+T)-w_{\gamma}u(t) \right),
\end{equation*}
where
\begin{equation*}
w_{\gamma}=\frac{L_*(\gamma)}{A_U}
\end{equation*}
is the {\it mean width} of $\gamma$. In the euclidean case, $\gamma_2$ was called the constant width measure set of $\gamma$
in \cite{Zwier3}. 
Is is clear that $\gamma_2$ is symmetric and 
$$
L_*(\gamma_2)=L_*(\gamma)-\frac{1}{2}\int_0^{2T}w_{\gamma}[u,u'](t)dt=0,
$$
thus proving that $\gamma_2\in\mathcal{C}_{sym}^0$. Moreover 
$$
\gamma(t)-\gamma_2(t)=\gamma_1(t)+\frac{1}{2}w_{\gamma}u(t) 
$$
has constant width, which means $\left(\gamma-\gamma_2\right)\in\mathcal{C}_{cw}^0\oplus\mathcal{U}$. We conclude that
\begin{equation*}
\pi_{sym}^0(\gamma)=\frac{1}{2}\left( \gamma(t)-\gamma(t+T)-w_{\gamma}u(t) \right).
\end{equation*}

For future reference, we remark that 
\begin{equation}\label{eq:Decomposition}
\gamma=\pi_{cw}^0(\gamma)+\pi_{sym}^0(\gamma)+\frac{w_{\gamma}}{2}u.
\end{equation}

\begin{Proposition}
We have that $A(\pi_{cw}^0(\gamma))\leq 0$ with equality if and only if $\gamma$ is symmetric and $A(\pi_{sym}^0(\gamma))\leq 0$
with equality if and only if $\gamma$ is of constant width.
\end{Proposition}

\begin{proof}
The Proposition follows immediately from the fact that 
$$
L_*(\pi_{cw}^0(\gamma))=L_*(\pi_{sym}^0(\gamma))=0
$$
and Lemma \ref{lemma:DesigMink}.
\end{proof}

\begin{example}
The $\pi_{cw}^0(\gamma)$ and $\pi_{sym}^0(\gamma)$ of the curve $\gamma$ of Example \ref{ex:AdmCurve} can be seen in Figure \ref{Fig:WCCWMS}. 

\begin{figure}[htb]
\centering
\subfigure{
\includegraphics[width=.3\textwidth]{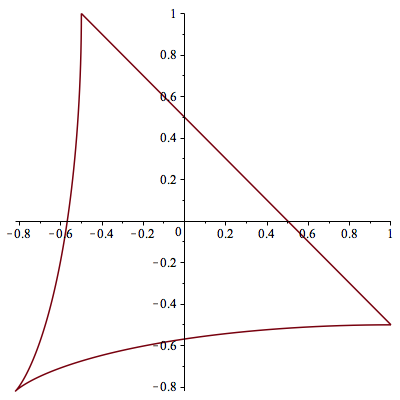}  }
\subfigure{
\includegraphics[width=.3\textwidth]{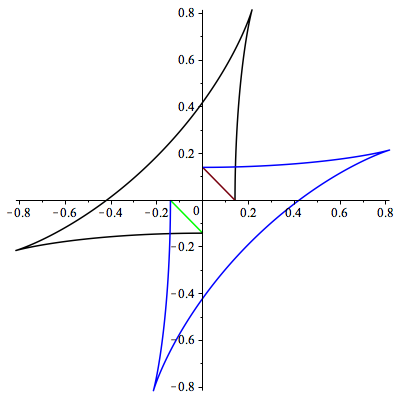} }
\caption{The projections $\pi_{cw}^0(\gamma)$ and $\pi_{sym}^0(\gamma)$ of the curve $\gamma$ of Example \ref{ex:AdmCurve}. Their areas 
are approximately $-1.33$ and $-0.48$, respectively. }
\label{Fig:WCCWMS}
\end{figure}

\end{example}

\section{Improved Isoperimetric Inequalities}

\subsection{An isoperimetric equality}

\begin{Proposition}\label{prop:Main}
Let $\gamma\in\mathcal{C}$ be a convex curve. Then
\begin{equation*}\label{eq:Isoperimetric}
\frac{L_*^2(\gamma)}{4A(U)}=A_{\gamma}-2A({\pi_{cw}^0(\gamma)})-A({\pi_{sym}^0(\gamma)}).
\end{equation*}
\end{Proposition}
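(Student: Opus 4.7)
The plan is to feed the orthogonal decomposition (4.3) into the bilinear form $A(\cdot,\cdot)$ and exploit pairwise orthogonality. Writing $\gamma = WC(\gamma) + CWMS(\gamma) + \tfrac{w_\gamma}{2}u$ and expanding $A(\gamma,\gamma)$ by bilinearity produces three diagonal terms and three cross terms. The cross term $A(WC(\gamma), CWMS(\gamma))$ vanishes by the orthogonality lemma of Section 4.1. The two cross terms involving $u$ vanish by the identity $A(u,\cdot) = \tfrac{1}{2}L_*(\cdot)$ from (3.1) together with the fact, established in the proposition of Section 4.2, that $WC(\gamma)\in\mathcal{C}_0^{cw}$ and $CWMS(\gamma)\in\mathcal{C}_0^{sym}$ both have zero dual length. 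So $A(\gamma,\gamma)$ collapses to the sum of its three diagonal contributions.

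The $u$-diagonal is essentially the left-hand side: $A(\tfrac{w_\gamma}{2}u, \tfrac{w_\gamma}{2}u) = \tfrac{w_\gamma^2}{4}A_U = \tfrac{L_*^2(\gamma)}{4A_U}$, using the definition $w_\gamma = L_*(\gamma)/A_U$. What remains is to match $A(WC(\gamma), WC(\gamma))$ and $A(CWMS(\gamma), CWMS(\gamma))$ with the areas $A_{WC(\gamma)}$ and $A_{CWMS(\gamma)}$ appearing in (5.1). This is where I expect the main bookkeeping effort, and it is what forces the factor $2$ in front of $A_{WC(\gamma)}$. The key observation is that a curve $\beta\in\mathcal{C}_0^{cw}$ must in fact satisfy $\beta(t+T) = \beta(t)$: from $[\beta,v](t+T) + [\beta,v](t) = 0$ and $v(t+T) = -v(t)$, the vector $\beta(t) - \beta(t+T)$ is parallel to $v(t)$, and differentiating and using $v'\parallel u$ together with the linear independence of $u$ and $u'$ forces this difference to vanish. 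Hence $WC(\gamma)$ has period $T$, is traversed twice by the $[0,2T]$-parametrization, and therefore $A(WC(\gamma), WC(\gamma)) = 2\,A_{WC(\gamma)}$ when $A_{WC(\gamma)}$ denotes the geometric signed area of one loop. The constant-width measure set, by contrast, satisfies $CWMS(\gamma)(t+T) = -CWMS(\gamma)(t)$, which gives genuine period $2T$ (traversed once), so no doubling occurs and $A(CWMS(\gamma), CWMS(\gamma)) = A_{CWMS(\gamma)}$.

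Assembling the pieces yields $A_\gamma = 2\,A_{WC(\gamma)} + A_{CWMS(\gamma)} + \tfrac{L_*^2(\gamma)}{4A_U}$, which is (5.1) after rearrangement. I note that the convexity hypothesis on $\gamma$ plays no role in the identity itself; it is there only so that $A_\gamma$ is the enclosed area and $L_*(\gamma)$ is the dual perimeter, ensuring the formula carries its intended reading as a refinement of the Busemann isoperimetric inequality.
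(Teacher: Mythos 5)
Your proposal is correct and follows essentially the same route as the paper: expand $A(\gamma,\gamma)$ over the decomposition \eqref{eq:Decomposition}, kill the cross terms by orthogonality and the vanishing dual lengths, and get the factor $2$ from the $T$-periodicity of the Wigner caustic. The only superfluous step is your argument that \emph{every} curve in $\mathcal{C}_0^{cw}$ is $T$-periodic (which is also shaky on the linear arcs, where $u''=0$): for the curve you actually need, $WC(\gamma)(t+T)=WC(\gamma)(t)$ is immediate from the definition and the $2T$-periodicity of $\gamma$.
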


\begin{proof}
Consider the orthogonal decomposition of $\gamma$ given by Equation \eqref{eq:Decomposition}. 
Taking into account that $WC$ is $T$-periodic, we obtain 
$$
A(\gamma)=2A(\pi_{cw}^0(\gamma))+A(\pi_{sym}^0(\gamma))+\frac{w^2_{\gamma}}{4}A(U).
$$
We conclude that
$$
A(\gamma)-2A(\pi_{cw}^0(\gamma))-A(\pi_{sym}^0(\gamma))=\frac{L^2_*(\gamma)}{4A(U)},
$$
thus proving the proposition.
\end{proof}

\subsection{Some consequences}

The next corollary gives us two new improved isoperimetric inequalities:

\begin{corollary}
Let $\gamma\in\mathcal{C}$ be a convex curve.
\begin{enumerate}
\item
The following improved isoperimetric inequality holds:
\begin{equation*}\label{eq:Isoperimetric}
\frac{L_*^2(\gamma)}{4A(U)}\geq A({\gamma})-A({\pi_{sym}^0(\gamma)}),
\end{equation*}
with equality if and only if $\gamma$ is symmetric.
\item
The following improved isoperimetric inequality holds:
\begin{equation*}\label{eq:Isoperimetric}
\frac{L_*^2(\gamma)}{4A(U)}\geq A({\gamma})-2A({\pi_{cw}^0(\gamma)}),
\end{equation*}
with equality if and only if $\gamma$ is of constant width.
\end{enumerate}
\end{corollary}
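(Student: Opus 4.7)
The plan is to derive both inequalities as direct algebraic consequences of the isoperimetric \emph{equality} in Proposition \ref{prop:Main}, combined with the sign information on the areas of $WC(\gamma)$ and $CWMS(\gamma)$ provided by the corollary at the end of Section~4.

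First, I would recall Proposition \ref{prop:Main}, which says
\begin{equation*}
\frac{L_*^2(\gamma)}{4A_U}=A_{\gamma}-2A_{WC(\gamma)}-A_{CWMS(\gamma)}.
\end{equation*}
For part (1), I would rearrange this identity as
\begin{equation*}
\frac{L_*^2(\gamma)}{4A_U}-\left(A_{\gamma}-A_{CWMS(\gamma)}\right)=-2A_{WC(\gamma)},
\end{equation*}
and then invoke the Section~4 corollary, which gives $A_{WC(\gamma)}\leq 0$ with equality precisely when $\gamma$ is symmetric. This immediately yields the desired inequality together with its equality case.

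For part (2), the symmetric manipulation is
\begin{equation*}
\frac{L_*^2(\gamma)}{4A_U}-\left(A_{\gamma}-2A_{WC(\gamma)}\right)=-A_{CWMS(\gamma)},
\end{equation*}
and the same corollary gives $A_{CWMS(\gamma)}\leq 0$ with equality exactly when $\gamma$ is of constant width, which finishes the proof.

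There is essentially no obstacle: the heavy lifting was already done in the orthogonal decomposition of Section~4 (where $WC(\gamma)\in\mathcal{C}_0^{cw}$ and $CWMS(\gamma)\in\mathcal{C}_0^{sym}$ both have zero dual length, so Lemma \ref{lemma:DesigMink} forces their signed areas to be nonpositive with the stated equality cases) and in Proposition \ref{prop:Main}. The only thing worth double-checking is the bookkeeping on the coefficients $2$ and $1$ in front of $A_{WC(\gamma)}$ and $A_{CWMS(\gamma)}$, which come from the fact that $WC(\gamma)$ is $T$-periodic while $CWMS(\gamma)$ has period $2T$; since these coefficients are already correctly tracked in Proposition \ref{prop:Main}, the corollary follows by a one-line rearrangement in each case.
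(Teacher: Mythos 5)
Your proposal is correct and matches the paper's (implicit) argument exactly: the corollary is a one-line rearrangement of the equality in Proposition \ref{prop:Main} combined with the signs $A_{WC(\gamma)}\leq 0$ and $A_{CWMS(\gamma)}\leq 0$ and their equality cases from the corollary in Section~4. The sign bookkeeping in both parts checks out.
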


Next corollary recovers the isoperimetric inequality of Busemann in a more restricted case. 
In fact, Busemann's inequality holds for any convex curve $\gamma$ and any normed plane (see \cite{Busemann1}).

\begin{corollary}\label{cor:IsoIneq}
Let $\gamma\in\mathcal{C}$ be a convex curve. Then
\begin{equation}\label{eq:IsoIneq}
\frac{L_*^2(\gamma)}{4A(U)}\geq A({\gamma}),
\end{equation}
and equality holds if and only if $\gamma$ is a multiple of the unit ball.
\end{corollary}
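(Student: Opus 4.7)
The plan is to read off Corollary \ref{cor:IsoIneq} directly from the isoperimetric equality of Proposition \ref{prop:Main} combined with the sign information on $A_{WC(\gamma)}$ and $A_{CWMS(\gamma)}$ from the preceding corollary.

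First I would rewrite Proposition \ref{prop:Main} as
\begin{equation*}
\frac{L_*^2(\gamma)}{4A_U}=A_{\gamma}-2A_{WC(\gamma)}-A_{CWMS(\gamma)}.
\end{equation*}
The corollary just before this one established $A_{WC(\gamma)}\le 0$ and $A_{CWMS(\gamma)}\le 0$, so the two correction terms on the right are nonnegative. Dropping them gives the inequality $\frac{L_*^2(\gamma)}{4A_U}\ge A_\gamma$.

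For the equality case, I would argue as follows. Equality forces $A_{WC(\gamma)}=0$ and $A_{CWMS(\gamma)}=0$ simultaneously. Tracing through the proof of the preceding corollary: since $L_*(WC(\gamma))=L_*(CWMS(\gamma))=0$, Lemma \ref{lemma:DesigMink} says that vanishing signed area forces each projection to be a scalar multiple of $u$. But $WC(\gamma)\in\mathcal{C}_0^{cw}$ and $CWMS(\gamma)\in\mathcal{C}_0^{sym}$, whereas the only scalar multiple of $u$ lying in $\mathcal{C}_0^{cw}$ or in $\mathcal{C}_0^{sym}$ is the zero multiple (since $L_*(u)=2A(u)\ne 0$ and $u$ is not origin-symmetric). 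Hence both $WC(\gamma)$ and $CWMS(\gamma)$ vanish identically, and the decomposition \eqref{eq:Decomposition} collapses to $\gamma=\frac{w_\gamma}{2}u$, so $\gamma$ is a multiple of the unit ball. Conversely, if $\gamma=ku$, a direct computation gives $L_*(\gamma)=2kA_U$ and $A_\gamma=k^2A_U$, and both sides of \eqref{eq:IsoIneq} equal $k^2A_U$.

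There is no real obstacle; the only subtle point is the equality analysis, where one must be careful that ``multiple of $u$'' from Lemma \ref{lemma:DesigMink} together with membership in $\mathcal{C}_0^{cw}$ or $\mathcal{C}_0^{sym}$ actually forces the scalar to be zero. Once that observation is in place, the proof is just two lines combining Proposition \ref{prop:Main} with the previous corollary.
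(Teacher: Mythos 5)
Your proof is correct and is essentially the paper's intended argument: the corollary is stated there without proof precisely because it follows at once from Proposition \ref{prop:Main} together with the signs $A_{WC(\gamma)}\le 0$ and $A_{CWMS(\gamma)}\le 0$ and their equality cases, which is exactly what you do. One small slip in your equality analysis: $u$ \emph{is} origin-symmetric by hypothesis ($u_{i+n}=-u_i$), so the correct (and sufficient) reason that the only multiple of $u$ lying in $\mathcal{C}_0^{sym}$ or $\mathcal{C}_0^{cw}$ is the zero one is solely that $L_*(u)=2A_U\neq 0$, which you also give, so the argument stands.
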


\subsection{ Lhuilier's inequality}

In this section we prove Lhuilier's inequality, taking advantage of the fact that our result holds not only for smooth unit circles
but also for polygonal unit circles.

Consider a convex poligon $K$ and let $K_1$ be the polygon which is circumscribed about the unit circle and whose sides are respectively parallel to the sides of $K$. Let $L_*(K)$ denotes the dual length of $K$, 
$A(K)$ the area enclosed by $K$ and $A(K_1)$ the area enclosed by $K_1$. Then Lhuilier's theorem (\cite{Chakerian}) states that 
\begin{equation*}
\frac{L_*^2(K)}{4A(K_1)}\geq A(K),
\end{equation*}
with equality if and only if $K$ is a constant multiple of $K_1$.

We shall now proof a weaker version of Lhuilier's inequality, namely
\begin{equation}\label{eq:Lhuilier2}
\frac{L_*^2(K)}{4A(K_1^0)}\geq A(K),
\end{equation}
where $K_1^0=K_1\cap(-K_1)$ is the symmetrization of $K_1$, with equality if and only if $K$ is a constant multiple of $K_1^0$.
This inequality coincides with Lhuilier's inequality if $K$ has parallel opposite sides. To prove the general case of Lhuilier's inequality, 
we need to develop the results of this paper in the more general context of a non-symmetric unit ball. 

To prove Inequality \eqref{eq:Lhuilier2}, consider the normed plane with unit ball $K_1^0$. Observe that the dual length $L_*(K)$ with respect to original normed plane is the same as the dual length of $K$ with respect to the normed plane with unit ball $K_1^0$. Then by Inequality \eqref{eq:IsoIneq} applied
to $K\in\mathcal{C}(K_1^0)$, i.e., in the normed plane whose unit ball is $K_1^0$, we obtain Inequality \eqref{eq:Lhuilier2}.

\end{document}